\newtheorem{Theorem}{Theorem}
\newtheorem{Proposition}[Theorem]{Proposition}
\date{}
\title{\LARGE \bf
Optimal motion of a scallop: some case studies}
\author{Rosario Maggistro$^{1}$ and Marta Zoppello$^{2}$
\thanks{$^{1}$Department of Management, Universit\`{a} Ca' Foscari Venezia,
Fondamenta S. Giobbe, 873, 30121 Cannaregio, Venezia, Italy
        {\tt\small rosario.maggistro@unive.it}}%
\thanks{$^{2}$Department of Computer Science, Universit\`{a} di Verona, Strada le Grazie, 15, 37134, Verona, Italy
        {\tt\small marta.zoppello@univr.it}}%
}
\begin{document}

\maketitle
\thispagestyle{empty}
\pagestyle{empty}

\begin{abstract}
In this paper we focus on a two-link swimmer called scallop which moves changing dynamics between two fluids regimes. We address and solve explicitly two optimal control problems, the minimum time one and the minimum quadratic cost needed to move the swimmer between two fixed positions using a periodic control. Considering only one switching in the dynamics and exploiting the structure of the equation of motion we are able to split the problem into simpler ones. We solve explicitly each sub-problem obtaining a discontinuous global solution. Then we approximate it  through a suitable sequence of continuous functions. Finally, we show numerical simulations suggesting that to switch less times is the best strategy for both costs.
\end{abstract}
\begin{keywords}
	Scallop swimmer, switching dynamics, optimal control, Pontryagin's Maximum Principle.
\end{keywords}
\section{INTRODUCTION}
The study of locomotion strategies in fluids and its link with the construction of artificial devices that can self-propel in fluids have generated, in the recent years, considerable interest by different research communities. Theories of swimming generally utilize either low Reynolds
number approximation \cite{AlougesDeSimone13, AlougesDeSImone08} and Resistive Force Theory (RTF) \cite{GrayHancock55}, or the assumption of
inviscid ideal fluid dynamics (high Reynolds number) \cite{ChambrionMunnier,CardinZoppello}.
These two different regimes are also distinct in terms
of the mechanism of locomotion \cite{Childress}--\cite{Lighthill}.
In this paper we focus on swimmers immersed in
these two kind of fluids which produce a linear
dynamics. In particular we consider the system describing
the motion of a scallop 
for which several strategies were proposed in order to overcome the so called Scallop theorem. According to it a swimmer that moves opening and closing periodically its valves, cannot achieve any net motion both in a viscous and ideal irrotational fluid, because of the time reversibility of the equations \cite{Purcell,AlougesDeSImone08}.
Some authors in order to solve the no net motion problem,
added, for example, a degree of freedom
introducing the Purcell swimmer \cite{Purcell}, or the three
sphere swimmer \cite{Golestain}. Others, instead, supposed the
scallop immersed in a non Newtonian fluid, in which
the viscosity is not constant, ending up with a non
reversible dynamics \cite{Cheng}--\cite{Qiu}.
Here, inspired by an idea of A. Bressan,
we use the approach proposed in \cite{Meccanica} where a change in the fluid's regime between the opening and closing of the valves is supposed. Moreover an hysteresis operator is introduced 
through a thermostat, see Fig. \ref{termostato} (see \cite{Visintin} for mathematical models for hysteresis), to model a delay
in this change of fluid's regime. This operator has been used in other contexts to model the switching dynamics phenomenon 
\cite{MaggistroBagagiolo}. In our framework the use of this operator on the one hand permits both to overcome the Scallop theorem and make the system completely controllable. On the other hand it requires the use of a continuous periodic control (angular velocity) whose explicit expression is difficult to derive. In this paper we address the optimal time and optimal quadratic cost problem. This has been done for other kind of swimmers, for example the Purcell three link one \cite{GiraldiMartinon13,GiraldiZoppello14}. 



 Our main result is to provide an explicit periodic solution of two optimal control problems, i.e. the minimum time and the linear quadratic optimal control one, so that the scallop can move between two fixed positions.
The existence of such a solution is guaranteed thanks to the controllability of the scallop system with switching dynamics and only relaxing the hypothesis on the regularity of the control. Moreover, due to the structure of the equations of motion, we reduce to the case of only one switching and  overcome the difficulty of managing the changing dynamics splitting the optimal control problem into simpler sub-problems which, that can be solved in the relaxed framework. Thus using the Pontryagin Maximum Principle (PMP)  \cite{Agrachev,Trelat}, we solve explicitly the relaxed sub-problems.
Then we recover a continuous periodic solution building a suitable sequence of continuous controls which approximate in $L^1$ the (possible) discontinuous optimal one. In the quadratic case where only the square of the $L^2$-norm of the control is take into account as a cost function, we prove that the control that minimize that cost is the same one that solve the minimum time problem.\\Moreover, through numerical simulations we give some hints on what happens in the case of $n$ switchings in the dynamics. More precisely we find an upper bound for the cost with $n$ switchings  and show numerical simulations that suggest that performing less switchings is the best strategy.

\section{Setting of the problem}
\subsection{The model}
We recall the scallop swimmer introduced in \cite{Meccanica} and present its equation of motion as a system of ordinary differential equations. The swimmer is modelled as an articulated rigid body immersed in a fluid that changes its configuration.
It is composed by two rigid valves of elliptical shape of mass $m$, major semiaxes $a$ and minor $b$ with $b<<a$. The valves are joined in order that they can be opened and closed. Moreover this body is constrained to move along the $\vec{e}_x$-axis and is symmetric with respect to it. Finally we will neglect the interaction between the two valves. 
In order to determine completely its state we need the position of the juncture point $x$ and the orientation of each valve $\theta$ with respect to the $\vec{e}_x$-axis. 
\begin{figure}[H]
	\centering
	\includegraphics[scale=0.20]{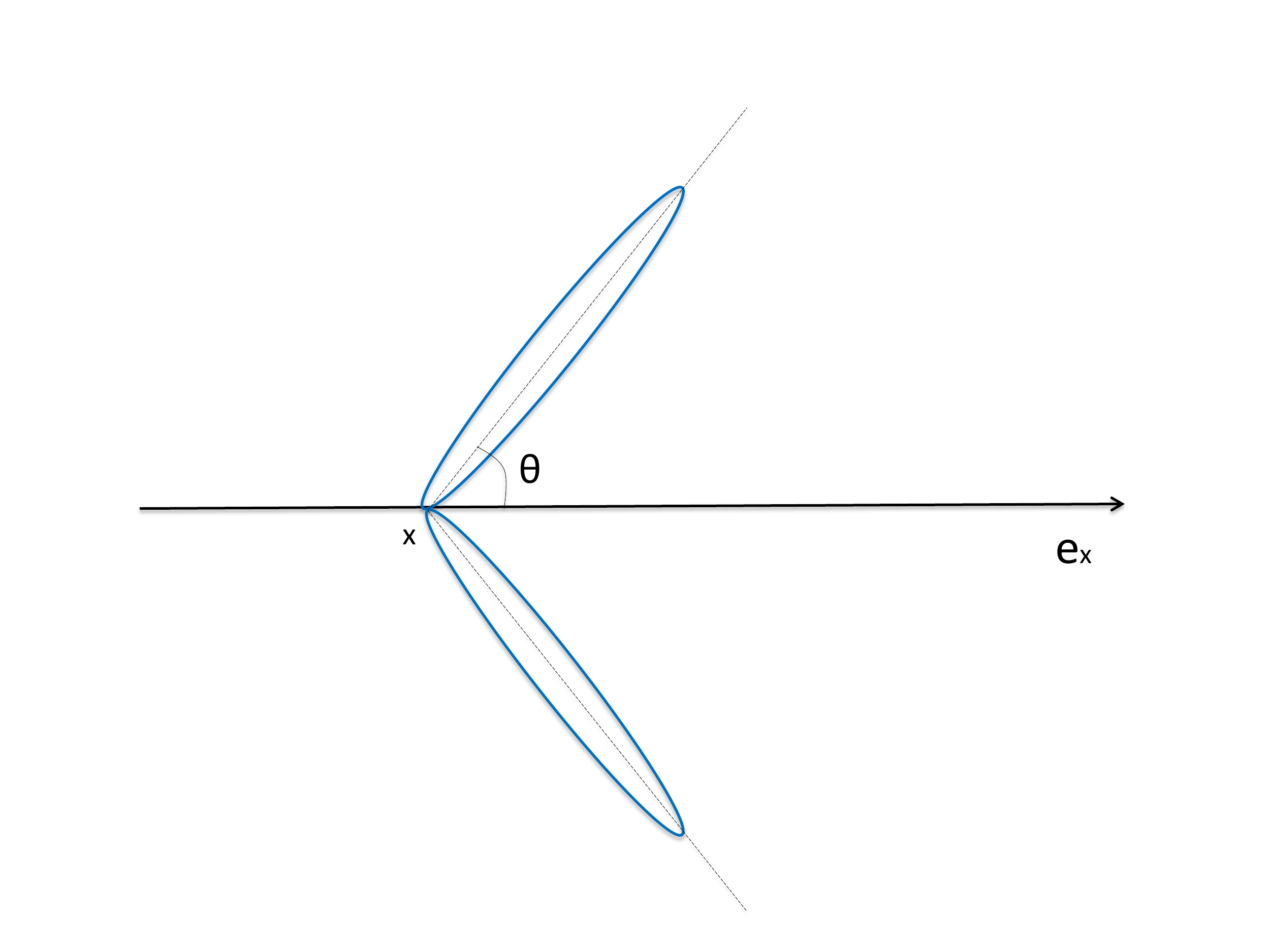}
	\caption{The scallop configuration}
	\label{fig:Scallop}       
\end{figure}
The temporal evolution of these coordinates is obtained solving the Newton's equations 
coupled with the Navier-Stokes equations relative to the surrounding fluid. We will face this problem considering the body as immersed in two kinds of different fluids: one viscous at low Reynolds number in which we neglect the effects of inertia, 
and another one ideal inviscid and irrotational, in which we neglect the viscous forces in the Navier-Stokes equations. 
In the viscous case we consider the two valves of the scallop as one dimensional links and use the local drag approximation of the Resistive Force Theory \cite{GrayHancock55} to compute the total drag force exerted by the fluid on the swimmer. According to it the density of the force $\vec{f}_{i}$ acting on the $i$-th segment at the point of arc-length $s$, is assumed to depend linearly on the velocity of that point. It is defined by
\begin{equation}
\vec{f}_i(s) :=-\xi \left( \vec{v}_i(s) \cdot \vec{e}_i \right) \vec{e}_i- \eta \left(\vec{v}_i(s) \cdot \vec{e}_i^{\bot}\right) \vec{e}_i^{\bot},
\label{ForceByResistiveTheory}
\end{equation}
where $\xi$ and $\eta$ are respectively the drag coefficients in the directions parallel, $\vec{e}_i$, and perpendicular, $\vec{e}_i^{\bot}$, to each link. 
 Integrating the density of force and neglecting inertia, (sum of forces equal zero), we obtain the following dynamics (see \cite{Meccanica} for details)
\begin{equation}
\label{x_viscous}
\dot{x}=V_1(\theta)\dot{\theta}=\frac{a \eta\sin(\theta)}{\xi\cos^2(\theta)+\eta\sin^2(\theta)}\dot{\theta}
\end{equation}

In the ideal case, instead, it has been proven in \cite{Bressan07}, \cite{CardinZoppello} that the system body $+$ fluid is geodetic with Lagrangian given by the sum of the kinetic energy of the body ($T^b$) and the one of the fluid ($T^f$).
$$
T^{tot}=T^b+T^f
$$
Moreover since inertial forces dominates over the viscous ones, in order to derive the kinetic energy of the fluid  we will make use of the concept of \textit{added mass} \cite{Bessel28}. To this end we consider each valve as a thin ellipse. Following a procedure introduced by Alberto Bressan in \cite{Bressan07}, in order to end up with a control system we perform a partial Legendre transformation on the kinetic energy  defining
$$p=\frac{\partial T^{tot}}{\partial\dot{x}}$$
from which we derive the equation of motion for $x$
\begin{equation}
\dot{x}=V_2(\theta)\dot{\theta}=\frac{2 a\dot{\theta}\sin\theta(m+m_{22})}{2(m+m_{11}\cos^2\theta+m_{22}\sin^2\theta)}
\end{equation}
where $m_{ii}$, $i=1\cdots3$, are the added mass coefficients of the ellipse \cite{Bessel28}.\\
Finally, to set notation, from now on we name $F_i$ the primitives of the dynamics $V_i$ $i=1,2$.
\subsection{Thermostatic-like case and controllability}
Our idea is now to use the angular velocity of opening and closing of the valves as a control, and according to \cite{Meccanica}, it is possible to overcome the famous \textit{Scallop theorem}, swithching between the two fluid regimes. More precisely if the valves are opening ($\dot\theta>0$) we suppose that the scallop is immersed in an ideal fluid; instead when the valves are closing  ($\dot\theta<0$) we assume the scallop immersed in a viscous fluid. This idea is inspired by \cite{Cheng} where the fluid has a pseudoelastic nature that helps the valve opening but resist the valve closing. 
Furthermore in \cite{Meccanica} it has been proven that the system is completely controllable using a periodic control $\dot\theta$ only introducing a delayed thermostatic rule. We suppose that the dynamics $V$  depends on the angle  $\theta \in ]0, \frac{\pi}{2}[$, and also depends on a discrete variable $w \in \left\{1, 2\right\}$, which characterizes the fluid regime ($2$ ideal or $1$ viscous) whose evolution is governed by a delayed thermostatic rule, $h_{\varepsilon}[\cdot]$, subject to the evolution of the control $u$. In Fig. \ref{termostato} the behavior of such a rule  is explained, correspondingly to the choice of a fixed threshold parameter $\varepsilon>0$ (see \cite{Visintin} for a complete explanation).
This rule models a situation in which the change between the two fluid regimes is not abrupt from opening to closure of the valves, but the system remains in the fluid regime of origin as long as the angular velocity has reached the threshold $\varepsilon$, then switches.
\begin{figure}[H]
		\begin{flushleft}
			\includegraphics[scale=0.22]{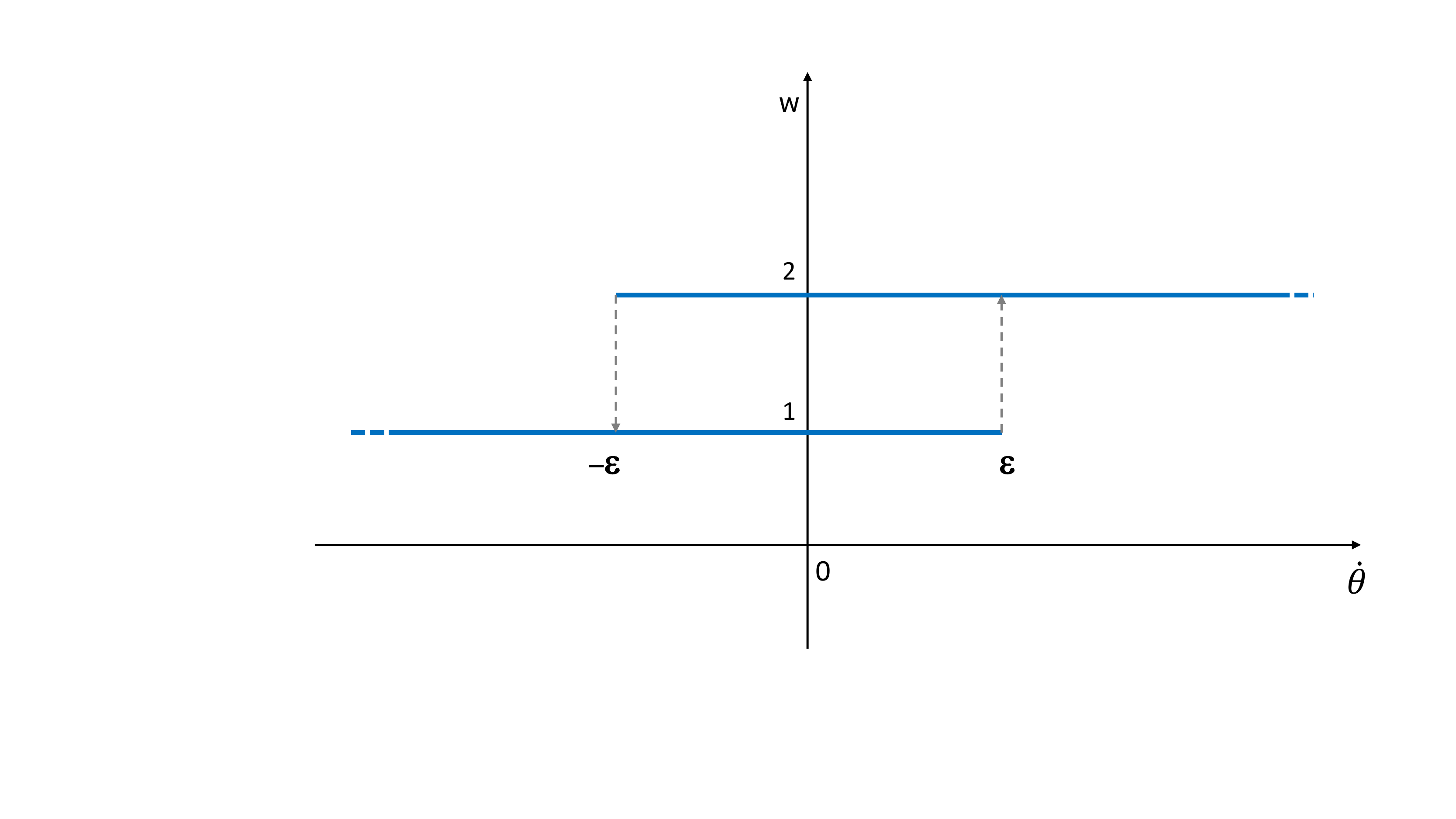} 
			\vspace{-1cm}
\caption{\label{termostato} The thermostatic approximation}
\end{flushleft}
\end{figure} 
The controlled evolution is then given by
\begin{equation}
\label{control_syst_scallop_hyst}
\begin{cases}
\dot{x}(t) = V_{w(t)}(\theta(t))u(t), \\
\dot\theta(t)=u(t)\\
w(t)= h_{\varepsilon}[u](t) \\
x(0)=x_0, \ \ \theta(0)=\theta_0\\
u(0)=u_0, \ \ w(0)= w_0
\end{cases} 
\end{equation}
\section{Minimum time optimal control problem}
\label{Sec:Minimum_time}
In this section we study a minimum time problem for the scallop, we provide the conditions for which there exists an optimal solution and compute it explicitly.\\
We assume that the swimmer starts at the initial
configuration $(x(0), \theta(0), u(0))$ and we want to
find a swimming strategy that minimizes the time to reach
the final configuration $(x(t_f), \theta(t_f), u(t_f))$. Note that in \cite{Meccanica} we consider three possible cases: i) $u_0 \in [-\varepsilon, \varepsilon]\,, w_0=2$; ii) $u_0<-\varepsilon \,, w_0=~1$; iii) $u_0>\varepsilon \,, w_0=2$. Here, we will consider only the case i) observing that the other two cases can be treated in analogous way. 
Then we have
\begin{equation}\label{Initial}
\begin{cases}
\displaystyle\inf_u \int_0^{t_f} ds\\
\dot{x}(t)=V_w(\theta(t))u(t), \qquad \forall t \in [0,t_f], \\
\dot{\theta}(t)=u(t), \qquad  \forall t \in [0,t_f], \\
w(t)= h_{\varepsilon}[u](t) \\
u\in C^0[0,t_f], \qquad u(0)=u(t_f)=u_0,\\
u(t) \in U:= [-\varepsilon, \varepsilon],\qquad \forall t \in [0,t_f], \\
x(0)=x_0, \qquad x(t_f)=x_f,\\
\theta(0)=\theta(t_f)=\theta_0

\end{cases}
\end{equation}
It has been already proved in \cite{Meccanica} that the system \eqref{Initial} is controllable by using a control $u$ in $C^0[0,t_f]$. However, 
since $u\in C^0[0,t_f]$, the theorem of Filippov-Cesary \cite{Trelat} can not be applied. Indeed it is based on approximating the optimal solution with a minimizing sequence that should converge exactly to the optimal solution. In the hypotheses of Filippov-Cesary theorem the space in which the controls live is $L^{\infty}(\cdot)$, and the dynamics evaluated in the sequence of minimizing controls converges weakly-star in $L^{\infty}(\cdot)$ to the dynamics evaluated in the optimal control. This is due to the fact that the space of admissible controls is complete with respect to the $L^1(\cdot)$ topology. In our case instead we have that the space of admissible controls is $C^0[0,t_f]$ which is not complete with respect to the $L^1(0,t_f)$ topology. Therefore, the theorem can not be used to prove the existence of the optimal solution of \eqref{Initial}. Actually we have the following
\begin{Proposition}\label{NoExist}
	There is no solution of the minimum time optimal control problem \eqref{Initial}.
\end{Proposition}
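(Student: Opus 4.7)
The plan is to show that the infimum of $t_f$ for problem \eqref{Initial} equals a critical value $T^*$ determined by the boundary data, but that this infimum is approached only by a bang--bang profile, so no continuous admissible control can attain it.

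First I would derive a strict lower bound. Since the thermostat triggers exactly one $w$-switch at some time $t_{\mathrm{sw}}$ with $u(t_{\mathrm{sw}})=-\varepsilon$, the change of variable $\dot\theta\,dt = d\theta$ applied on each phase yields
\begin{equation*}
x_f - x_0 = [F_2(\theta(t_{\mathrm{sw}})) - F_2(\theta_0)] + [F_1(\theta_0) - F_1(\theta(t_{\mathrm{sw}}))].
\end{equation*}
Since $F_2 - F_1$ is strictly monotone on $(0,\pi/2)$ in the physical regime under consideration, the switching angle $\theta^* := \theta(t_{\mathrm{sw}})$ is uniquely fixed by the data. The pointwise constraint $|u|\le\varepsilon$ then forces $t_{\mathrm{sw}} \ge (\theta^* - \theta_0)/\varepsilon$ and $t_f - t_{\mathrm{sw}} \ge (\theta^* - \theta_0)/\varepsilon$; equality in either bound would demand $u \equiv +\varepsilon$ on $(0,t_{\mathrm{sw}})$ or $u \equiv -\varepsilon$ on $(t_{\mathrm{sw}}, t_f)$, contradicting continuity together with the boundary condition $u(t_{\mathrm{sw}})=-\varepsilon$ and the endpoint values $u(0) = u(t_f) = u_0 \in [-\varepsilon,\varepsilon]$. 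Hence every admissible $u \in C^0$ satisfies $t_f > T^* := 2(\theta^* - \theta_0)/\varepsilon$ strictly.

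Second, I would exhibit a minimizing sequence proving the infimum equals $T^*$. For each positive integer $n$, let $u_n$ be piecewise linear, made of ramps of width $O(1/n)$ between consecutive plateaus $u_0 \to +\varepsilon \to -\varepsilon \to u_0$, with the two plateau lengths tuned so that $\theta(t_{\mathrm{sw}}^n) = \theta^*$ (prescribed displacement) and $\theta(t_f^n) = \theta_0$ (periodicity in $\theta$). An elementary computation then gives $t_f^n = T^* + O(1/n)$, hence $t_f^n \searrow T^*$. Combined with the strict lower bound above, this shows that $T^*$ is the infimum of the cost but is not attained by any continuous control, proving the proposition.

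The main obstacle is not either step individually but making the minimizing construction simultaneously satisfy the periodicity of $\theta$, the displacement in $x$, the one-switch constraint, and the boundary values of $u$. The ramp from $+\varepsilon$ down to $-\varepsilon$ passes through zero and contributes small parasitic integrals to both $\theta$ and $x$; a short linear adjustment of the two plateau lengths, well-defined for $n$ large, absorbs these $O(1/n)$ corrections and keeps each $u_n$ admissible.
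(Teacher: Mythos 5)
Your proposal is correct in substance but follows a genuinely different route from the paper's. The paper argues abstractly by contradiction: it posits an optimal $u^*\in C^0[0,t_f]$, asserts a minimizing sequence of trajectories converging uniformly (hence $u_n\to u^*$ in $L^1$), and invokes the non-completeness of $C^0[0,t_f]$ with respect to the $L^1$ topology to conclude that $u^*$ need not be continuous. You instead identify the infimum explicitly as $T^*=2(\theta^*-\theta_0)/\varepsilon$, show by a bang--bang rigidity argument that no continuous admissible control can attain it, and exhibit a ramp-and-plateau minimizing sequence realizing it. Your route is more concrete and, arguably, more airtight: the ``infimum computed and not attained'' scheme is the standard way to establish non-existence, whereas the paper's ``$u^*$ not necessarily in $C^0$'' step is closer to a heuristic for why Filippov--Cesari fails than to a contradiction. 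Your construction also anticipates, almost verbatim, the relaxed bang--bang solution \eqref{control1}--\eqref{control2} and the piecewise-linear approximants \eqref{u_k}--\eqref{t_k} that the paper only derives after the proposition. Two caveats: your argument is confined to the single-switch regime, which the paper imposes only after the proposition, so strictly speaking you prove non-attainment of the infimum among one-switch strategies; and the claim that each of the two phase bounds is individually strict can fail in the edge cases $u_0=\pm\varepsilon$ (e.g.\ if $u_0=-\varepsilon$ the second phase can be exactly bang--bang), although at least one bound is always strict for a continuous $u$, which is all you need for $t_f>T^*$.
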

\begin{proof}
Suppose for absurd that there exists an optimal solution $(x^*, \theta^*, u^*) \in C^0[0,t_f]\times C^1[0,t_f]\times C^0[0,t_f]$ of the problem \eqref{Initial} with $x^*=x(u^*)$ and $\theta^*=\theta(u^*)$. This means that there exists a minimizing trajectories sequence $(x_n, \theta_n)_n \in C^0[0,t_f]\times C^1[0,t_f]$ (with $x_n=x(u_n)$ and $\theta_n=\theta(u_n)$) such that
\begin{equation}\label{limitraject} \lim_{n\to \infty} \Vert(x_n, \theta_n)-(x^*, \theta^*)\Vert_\infty=0.
\end{equation} 
Condition \eqref{limitraject} implies that $u_n\to u^*$ in $L^1(0, t_f)$ from which follows that $u^*$ not necessarily belongs to $C^0[0,t_f]$, since $C^0[0,t_f]$ is not complete with respect to the $L^1(0, t_f)$ topology. This fact contradicts the hypothesis, hence the thesis holds.
\end{proof} 
As consequnce of the Proposition \ref{NoExist}, to obtain an optimal solution for \eqref{Initial} our strategy consists in relaxing the hypothesis on the regularity of the control, to prove the existence of a solution and then try to approximate it with sequences of continuous functions which converge to the relaxed solution.
The relaxed formulation of \eqref{Initial} is	
\begin{equation}\label{Initial2}
\begin{cases}
\displaystyle\inf_u \int_0^{t_f} ds\\
\dot{x}(t)=V_w(\theta(t))u(t), \qquad \forall t \in [0,t_f], \\
\dot{\theta}(t)=u(t), \qquad  \forall t \in [0,t_f], \\
u\in L^{\infty}[0,t_f], \  
u(t) \in U:= [-\varepsilon, \varepsilon],\ \forall t \in [0,t_f], \\
w(t)= h_{\varepsilon}[u](t) \\
x(0)=x_0, \qquad x(t_f)=x_f,\\
\theta(0)=\theta(t_f)=\theta_0\\
\end{cases}
\end{equation}
Note that in \eqref{Initial2}, since the control $u$ is in $L^{\infty}$, we do not consider the initial and final condition on the control $u$ because now we are interesting on its value on positive measure intervals instead of on some points.  Moreover, even if the control is no more continuous the system remains controllable (as it is shown in the final numerical simulations in \cite{Meccanica}) since  $C^0[0,t_f]\subset L^{\infty}[0,t_f]$. Now by applying Filippov-Cesary Theorem (\cite{Trelat}), there exists a minimal time such that the constraints are satisfied i.e., the infimum in \eqref{Initial2} is a minimum.  \\
Now, the aim is to find an explicit solution of \eqref{Initial2}. At first we suppose that in order to arrive in $x_f$ starting from $x_0$ the dynamics switches from $2$ to $1$ only one time as in case i) in \cite{Meccanica}. Therefore we can split \eqref{Initial2} in two simpler sub-problems.
Indeed, fixed $\Delta x=F_2(\theta(t_1))-F_1(\theta(t_1))$ 
according to \cite{Meccanica} there exists $\theta_1$ such that $\theta(t_1)=\theta_1$, with $t_1$ the first switching instant, such that the corresponding $\Delta x$ is exactly $x_f-x_0$. Then we have
\begin{equation*}
(P1)\begin{cases}
\displaystyle\min_u t_1\\
\dot{\theta}(t)=u(t), \qquad \forall t \in [0, t_1],\\
u(t) \in [-\varepsilon, \varepsilon], \qquad \forall t \in [0, t_1],\\
\theta(0)=\theta_0 \quad \theta(t_1)=\theta_1
\end{cases}
\end{equation*}
\begin{equation*}
(P2) \begin{cases}
\displaystyle\min_u (t_f-t_1)\\
\dot{\theta}(t)=u(t), \qquad \forall t \in [0, t_f-t_1],\\
u(t) \in [-\varepsilon, \varepsilon], \quad \forall t \in [0, t_f-t_1],\\
\theta(0)=\theta_1 \quad \theta(t_f-t_1)=\theta_0.\\
\end{cases}
\end{equation*}	

We start studying the problem $(P1)$. We apply the Pontryagin Maximum Principle (PMP) to the associated Hamiltonian
\begin{equation}\label{H1}
H(\theta, p)=1+pu,	
\end{equation}
and we get
\begin{equation}\label{PMP}
\dot{p}=-\frac{\partial H}{\partial \theta}=0 \Longrightarrow p(t)=p_0. 
\end{equation}
By the stationarity condition $H(\theta(t_1), p(t_1))=1+p_0u(t_1)$ with $\theta(t_1)=\theta_1$ we get\\
\textbf{Case 1:} if $\theta_1< \theta_0$ then $ u(t_1)=-\varepsilon$ and $p_0=1/\varepsilon$;\\
\textbf{Case 2:} if $\theta_1> \theta_0$ then $u(t_1)=\varepsilon$ and $p_0=-1/\varepsilon$.\\
Accordingly we have
\begin{equation}\label{control1}
\small
u=
\begin{cases}
\varepsilon & \text{if} \quad p_0<0\\
-\varepsilon & \text{if} \quad p_0>0,
\end{cases}\,, \ \theta(t)=
\begin{cases}
\varepsilon t+\theta_0 & \text{if} \quad p_0<0, \\
-\varepsilon t +\theta_0  &\text{if} \quad p_0>0
\end{cases}
\end{equation}

and
\begin{equation}\label{optimalt1}
t_1=\begin{cases}
(\theta_0-\theta_1)/\varepsilon  & \text{if} \quad \theta_1< \theta_0,\\
(\theta_1-\theta_0)/\varepsilon & \text{if} \quad\theta_1> \theta_0.
\end{cases}
\end{equation}
%
Now we consider the problem $(P2)$ and let $t_2=t_f-t_1$. Proceeding as in \eqref{H1}-\eqref{PMP}  
and by the stationarity condition $H(\theta(t_2), p(t_2))=1+p_0u(t_2)$ with $\theta(t_2)=\theta_0$ follows that\\
\textbf{Case 1:} if $\theta_1< \theta_0$ then $ u(t_2)=\varepsilon$ and $p_0=-1/\varepsilon$;\\
\textbf{Case 2:} if $\theta_1> \theta_0$ then $u(t_2)=-\varepsilon$ and $p_0=1/\varepsilon$.\\ Hence
\begin{equation}\label{control2}
u=
\begin{cases}
\varepsilon & \text{if} \quad p_0<0\\
-\varepsilon & \text{if} \quad p_0>0,
\end{cases}\,, \ \theta(t)=
\begin{cases}
\varepsilon t+\theta_1 & \text{if} \quad p_0<0, \\
-\varepsilon t +\theta_1  &\text{if} \quad p_0>0
\end{cases}
\end{equation}
and
\begin{equation*}
t_2=\begin{cases}
(\theta_0-\theta_1)/\varepsilon & \text{if } \theta_1< \theta_0, \\
(\theta_1-\theta_0)/\varepsilon  & \text{if } \theta_1> \theta_0.
\end{cases}
\end{equation*}
Therefore we get in both cases
\begin{equation}
t_f=t_1+t_2=\frac{2|\theta_0-\theta_1|}{\varepsilon}
\end{equation}

We have obtained the bang bang optimal control \eqref{control1}-\eqref{control2} supposing that we do not need a continuous control, indeed the existence of a continuous optimal control is not ensured. Nevertheless we can use a continuous approximation of that bang bang control using piecewise linear functions as shown in Fig. \ref{control_approx} and computed below.
\begin{figure}[H]
\centering
\includegraphics [width=0.25\textwidth]{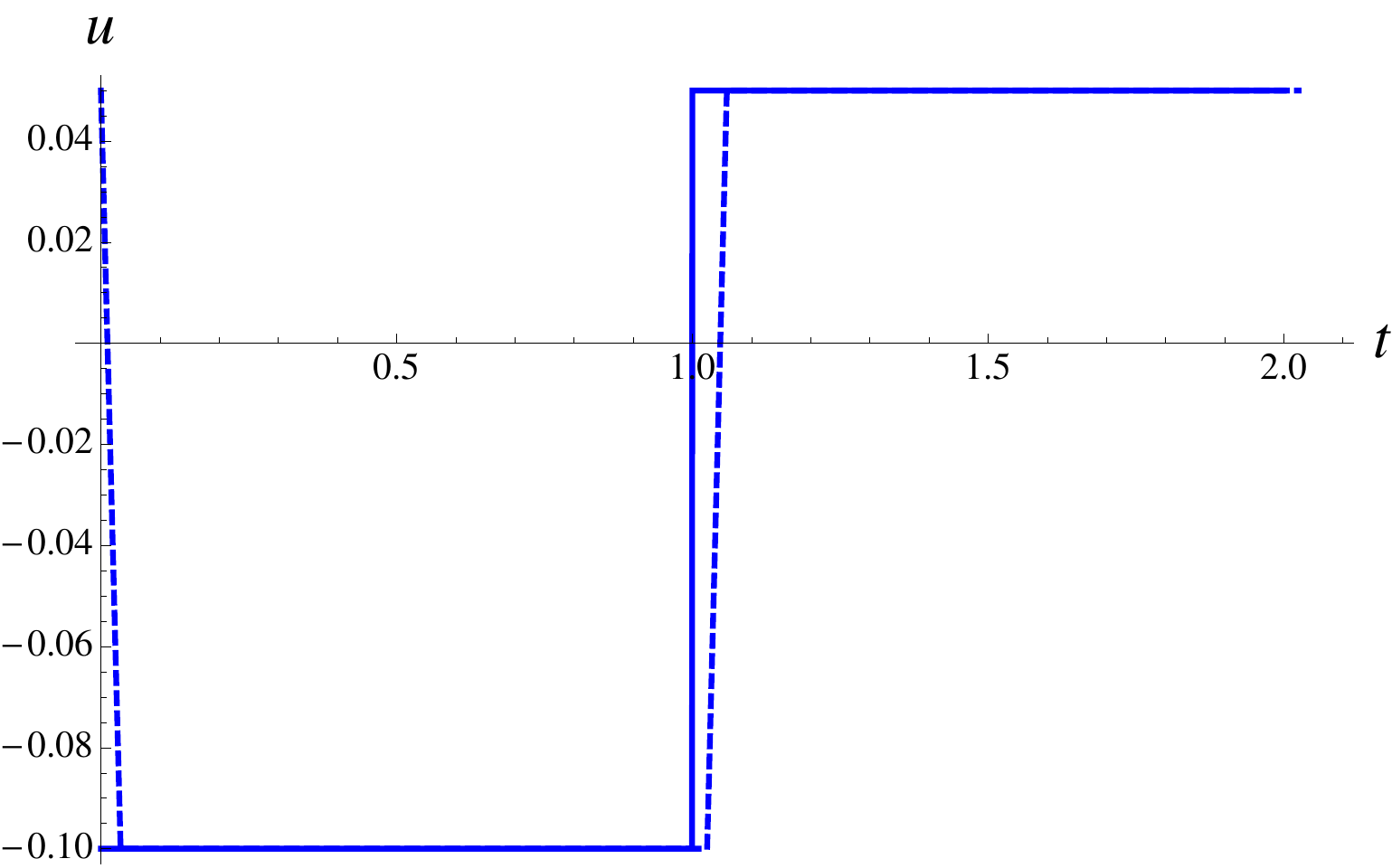}
\caption{\label{control_approx} Continuous approximation of the control in \eqref{control1}-\eqref{control2}}
\end{figure}
In the situation described by \textbf{Case 1} for the problem $(P1)$ let
\begin{equation}\label{u_k}
u_k(t)=
\begin{cases}
-kt(\varepsilon+u_0)+u_0 & 0\leq t\leq 1/k,\\
-\varepsilon & 1/k\leq t\leq \tilde{t}_{1_k},
\end{cases}
\end{equation}
\begin{equation}\label{theta_k}
\theta_k(t)=
\begin{cases}
\displaystyle\frac{-kt^2}{2}(\varepsilon + u_0)+u_0t+\theta_0 & 0\leq t\leq 1/k,\\
\displaystyle-\varepsilon(t-1/k)+\frac{u_0-\varepsilon}{2k}+\theta_0 & 1/k\leq t\leq \tilde{t}_{1_k}.
\end{cases}
\end{equation}
be the continuous approximation of the control and the corresponding trajectory in \eqref{control1} . Note that now we have to consider again the fact that in \eqref{Initial} $u(0)=u_0=u(t_f)$. Therefore we can compute the new switching time $\tilde{t}_{1_k}$ imposing the following
\begin{equation*}
\displaystyle\theta_k(\tilde{t}_{1_k})=\theta_1 \Longleftrightarrow \tilde{t}_{1_k}=\frac{u_0+\varepsilon+2k(\theta_0-\theta_1)}{2k\varepsilon}.
\end{equation*}
It is clearly different from the optimal switching time $t_1$ \eqref{optimalt1} but converges to it as $k\to\infty$.\\
In \textbf{Case 2} for $(P1)$ and  \textbf{Case 1} and \textbf{2}  for $(P2)$, similar approximations to ones in \eqref{u_k}--\eqref{theta_k} hold.\\
Therefore we compute the time $\tilde{t}_{2_k}$ imposing the final condition for $\theta_k$
\begin{equation*}
\displaystyle\theta_k(\tilde{t}_{2_k})=\theta_0 \Longleftrightarrow \tilde{t}_{2_k}=\frac{\varepsilon+k(\theta_0-\theta_1)}{k\varepsilon},
\end{equation*}
Finally, putting together the values of $\tilde{t}_{1_k}$ and $\tilde{t}_{2_k}$ we get the final time 
\begin{equation} \label{t_k}
\displaystyle t_{f_k}=\tilde{t}_{1_k}+\tilde{t}_{2_k}=\frac{u_0+3\varepsilon+4k(\theta_0-\theta_1)}{2k\varepsilon}
\end{equation}
In both cases above, the approximation through piecewise linear function implies that:
\begin{equation*}
\begin{aligned}
& \lim_{k\to +\infty}u_k=u \ \text{in} \ L^1([0, t_f])\\
& \lim_{k\to +\infty}t_{f_k}=t_f.
\end{aligned}
\end{equation*}
Moreover we have the following proposition.
\begin{Proposition}\label{convtraiett}
	The trajectory $\theta_k$ converges uniformly to $\theta$, i.e. 
	$$
	\lim_{k\to +\infty} \theta_k = \theta \ \text{in} \ L^{\infty}([0, t_f])
	$$
\end{Proposition}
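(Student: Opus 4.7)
The plan is to exploit the fact that both $\theta$ and $\theta_k$ are absolutely continuous solutions of the same ODE $\dot\theta = u$, $\theta(0) = \theta_0$, driven by the controls $u$ and $u_k$ respectively. This linear dependence of the trajectory on the control (in integral form) should transfer $L^1$ convergence of controls directly into $L^\infty$ convergence of trajectories, without any Gronwall-type argument being required.

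Concretely, the first step will be to write, for any $t$ in the common interval of definition,
\[
\theta_k(t) - \theta(t) = \int_0^t \bigl(u_k(s) - u(s)\bigr)\, ds,
\]
using that both trajectories share the initial datum $\theta_0$. The second step is then the pointwise triangle inequality
\[
|\theta_k(t) - \theta(t)| \le \int_0^{t} |u_k(s) - u(s)|\, ds \le \|u_k - u\|_{L^1([0,t_f])},
\]
which is uniform in $t$. Taking the supremum in $t$ and invoking the $L^1$-convergence $u_k\to u$ established just before the statement immediately gives $\|\theta_k - \theta\|_{L^\infty([0,t_f])} \to 0$, which is the claim.

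The step I expect to require a bit of care is reconciling the domains: $\theta_k$ is naturally defined on $[0, t_{f_k}]$ (with switching time $\tilde t_{1_k}$) while $\theta$ lives on $[0,t_f]$ (with switching time $t_1$), and these intervals differ by $|t_{f_k} - t_f|$. My plan is to extend $u_k$ by $0$ past $t_{f_k}$ (so that $\theta_k$ remains constantly equal to $\theta_0$ afterwards), and similarly for $\theta$ if $t_{f_k} > t_f$. Since $|u_k|,|u| \le \varepsilon$, the trajectories are uniformly $\varepsilon$-Lipschitz, so the boundary discrepancy contributes at most $\varepsilon|t_{f_k} - t_f|$, which vanishes thanks to $t_{f_k} \to t_f$ established in \eqref{t_k}. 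This absorbs the mismatch into the same $L^1$-estimate above. No other obstacle is anticipated, because the controllability of the system and the existence of the relaxed optimum have already been handled upstream, and the piecewise-linear structure of $u_k$ in \eqref{u_k} makes the $L^1$-convergence to the bang-bang control explicit through the area of the triangular transition regions of width $1/k$.
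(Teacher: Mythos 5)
Your proposal is correct and follows essentially the same route as the paper: both integrate the ODE $\dot\theta=u$ from the common initial datum, bound $\|\theta_k-\theta\|_\infty$ by $\int|u_k-u|\,ds$, and conclude from the $L^1$ convergence of the controls, with a minor extension of the controls to reconcile $[0,t_{f_k}]$ with $[0,t_f]$. Your treatment of the domain mismatch (via the uniform $\varepsilon$-Lipschitz bound and $t_{f_k}\to t_f$) is in fact spelled out more carefully than in the paper, which simply extends $u$ continuously on $[t_f,t_{f_k}]$.
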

\begin{proof}
We have just shown that $ \lim_{k\to +\infty} u_k =~u$ in $L^{1}([0, t_f])$ and $u_k$ are equibounded in $L^{\infty}([0, t_f])$. 
Thus we have the following estimate
$$
||\theta_k-\theta||_{\infty}\leq\int_0^{t_{f_k}} |u_k(s)-u(s)|\,ds
$$
where we assume to continuously extend the control $u$ in the interval $[t_f, t_{f_k}]$.
Therefore we conclude for the $L^1$ convergence of $u_k$ to $u$. 
\end{proof}

\section{Linear Quadratic optimal control problem}
In this section we minimize a linear quadratic cost instead of the minimum time as in the previous section. In particular, we consider the following problem
\begin{equation}\label{quadratic}
\begin{cases}
\displaystyle\inf_{u \in [-\varepsilon, \varepsilon]} \int_0^{t_f} A u^2(s)+B \theta(s)^2\, ds, \quad A, B > 0\\
\dot x(t)=V_w(\theta(t))u(t) \qquad \forall t \in [0, t_f],\\
\dot{\theta}(t)=u(t), \qquad \forall t \in [0, t_f],\\
w(t)=h_{\varepsilon}[u](t),\\
u\in C^0[0,t_f], \qquad u(0)=u(t_f)=u_0,\\
u(t) \in U:= [-\varepsilon, \varepsilon],\qquad \forall t \in [0,t_f], \\
x(0)=x_0,\qquad x(t_f)=x_f,\\
\theta(0)=\theta_0, \qquad \theta(t_f)=\theta_0.
\end{cases}
\end{equation}
that is a generalization of the linear quadratic cost associated to the linearized dynamics, thus in some sense can be considered as a generalized energy. In order to compute the optimal solution we proceed as before: first we relax the problem considering controls $u \in L^{2}(0, t_f)$. Then exploiting the properties of $\Delta x$ as function of $\theta(t_1)$, since there is a unique switching dynamics and the running cost is nonnegative, we divide the integral in \eqref{quadratic} in two parts. The first, in the time interval $[0, t_1]$, the second one in $[t_1, t_f]$. Hence consider the~first problem
\begin{equation}\label{quadraticfirst}
\begin{cases}
\displaystyle\min_{u \in [-\varepsilon, \varepsilon]} J[u]=\min_{u \in [-\varepsilon, \varepsilon]} \int_0^{t_1} A u^2(s)+B \theta(s)^2\, ds\\
\dot{\theta}(t)=u(t), \qquad \forall t \in [0, t_1],\\
u\in U=[-\varepsilon,\varepsilon],\quad u\in L^2[0,t_1],\\
\theta(0)=\theta_0, \qquad \theta(t_1)=\theta_1.
\end{cases}
\end{equation}
where we don't consider the initial and final condition on the control $u$ because now it is $L^2(0, t_1)$.
Let us now focus on the case $\theta_1>\theta_0$, (the reverse inequality can be treated similarly) and apply the PMP considering the associated  Hamiltonian
\begin{equation*}
H(\theta, u, p)=Au^2+B\theta^2+pu
\end{equation*}
Then, through the necessary condition for optimality, we get $\forall t \in [0, t_1]$
\begin{equation*}
\begin{cases}
 p(t)=-2\theta_0\sqrt{AB} \exp\left(\frac{\sqrt{B}}{\sqrt{A}}t\right),\\ 
 u(t)=\frac{\sqrt{B}}{\sqrt{A}}\theta_0\exp\left(\frac{\sqrt{B}}{\sqrt{A}}t\right),
\end{cases}\quad  \text{if} \quad \theta_1>\theta_0>0
\end{equation*}
Now we have to consider the constraint  $|u(t)|\leq \varepsilon$. 
Since the control $u$ is positive we get the following condition on $t$ 
\begin{equation}\label{condizione1}
t\leq\sqrt{\frac{A}{B}}\log\Bigl(\sqrt{\frac{A}{B}}\frac{\varepsilon}{\theta_0}\Bigr).
\end{equation}
In \eqref{condizione1} if $\varepsilon$ is small compared to $A, B, \theta_0$ one gets that $\log\Bigl(\sqrt{\frac{A}{B}}\frac{\varepsilon}{\theta_0}\Bigr)<0$, therefore for $t>0$ 
we have 
\begin{equation}\label{controllo1caso}
u(t)=\varepsilon.
\end{equation}


Thus considering the condition $\theta(t_1)=\theta_1$ we have
\begin{equation*}
t_1=
 (\theta_1-\theta_0)/{\varepsilon} 
\end{equation*}
The second problem is completely analogous to the first one \eqref{quadraticfirst} but in the time interval $[t_1,t_f]$ and with reversed initial and final conditions on $\theta$. Therefore the solution can be computed in a similar way
\begin{equation*}
\begin{cases}
 p(t)=2\theta_1\sqrt{AB} \exp\left(-\frac{\sqrt{B}}{\sqrt{A}}t\right), \\  u(t)=-\frac{\sqrt{B}}{\sqrt{A}}\theta_1\exp\left(-\frac{\sqrt{B}}{\sqrt{A}}t\right), 
\end{cases} \quad  \text{if} \quad \theta_1>\theta_0>0
\end{equation*}
Imposing again the constraint $|u(t)|\leq\varepsilon$ and following the considerations made for the first problem we obtain a condition on $t$ which leads to the following control 
\begin{equation*}
u(t)=\begin{cases}
-\varepsilon \quad  t<-\sqrt{\frac{A}{B}}\log\Bigl(\sqrt{\frac{A}{B}}\frac{\varepsilon}{\theta_1}\Bigr),\\
-\frac{\sqrt{B}}{\sqrt{A}}\theta_1\exp\left(-\frac{\sqrt{B}}{\sqrt{A}}t\right) \quad  t\geq-\sqrt{\frac{A}{B}}\log\Bigl(\sqrt{\frac{A}{B}}\frac{\varepsilon}{\theta_1}\Bigr)
\end{cases} 
\end{equation*}
Consequently we compute the corresponding final time
\begin{equation*}
t_2=t_f-t_1=
\begin{cases}
\frac{\sqrt{A}}{\sqrt{B}}\log\left(\frac{\theta_1}{\theta_0}\right) & \text {if} \quad \theta_1>\varepsilon\sqrt{\frac{A}{B}}>\theta_0,\\
(\theta_1-\theta_0)/{\varepsilon}& \text {if} \quad\theta_1>\theta_0>\varepsilon\sqrt{\frac{A}{B}},
\end{cases} \end{equation*}
which gives us
$$
t_f=\begin{cases}
\frac{\theta_1-\theta_0}{\varepsilon}+\sqrt{\frac{A}{B}}\log\left(\frac{\theta_1}{\theta_0}\right)& \text {if} \quad \theta_1>\sqrt{\frac{A}{B}}\varepsilon>\theta_0,\\
2(\theta_1-\theta_0)/{\varepsilon}& \text {if} \quad\theta_1>\theta_0>\sqrt{\frac{A}{B}}\varepsilon.\\
\end{cases}
$$
Note that the solution of \eqref{quadratic} $(\theta(t), u(t))$ for $t \in [0, t_f]$ is not continuous in $t_1 \in (0, t_f)$, hence we can consider a continuous approximation of the control $u$ through piecewise linear functions as done for the minimum time problem. For that we have to reconsider the initial and final condition $u(0)=u_0=u(t_f)$. In this way we get $u_k, \theta_k \in C^0[0, t_{f_k}]$  and $t_{f_k}$ such that
\begin{equation*}
\lim_{k\to +\infty}u_k=u \ \text{in} \ L^2([0, t_f]), \qquad \lim_{k\to +\infty}t_{f_k}=t_f.
\end{equation*}
Note that the above convergence of $u_k$ in $L^2$ is also valid in $L^1$ due to the boundedness of $[0, t_f]$.
Moreover the following hold
\begin{Proposition}
	The trajectory $\theta_k$ and the cost $J_k=J[u_k]$ converge uniformly to $\theta$ and $J$ respectively, i.e. 
	\begin{equation}
	\begin{aligned}
	\lim_{k\to +\infty} \theta_k = \theta \ \text{in} \ L^{\infty}([0, t_f]),\\
	\lim_{k\to +\infty} J_k = J \ \text{in} \ L^{\infty}([0, t_f]).
	\end{aligned}
	\end{equation}	
\end{Proposition}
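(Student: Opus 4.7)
The plan is to reduce both convergences to the already established $L^2$ (hence $L^1$) convergence $u_k \to u$ and the scalar convergence $t_{f_k}\to t_f$, exploiting the uniform bound $|u_k|,|u|\le \varepsilon$ and the fact that $\theta_k,\theta$ are recovered from their controls by integration against the common initial datum $\theta_0$.

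For the uniform convergence of $\theta_k$ to $\theta$, I would repeat verbatim the estimate of Proposition \ref{convtraiett}: since $\dot\theta_k = u_k$ and $\dot\theta = u$, for every $t\in[0,t_f]$
$$|\theta_k(t)-\theta(t)| \;\le\; \int_0^{\max(t_f,t_{f_k})} |u_k(s)-u(s)|\,ds,$$
where one continuously extends $u$ on $[t_f,t_{f_k}]$ (or $u_k$ on $[t_{f_k},t_f]$) exactly as in the earlier proof. Taking the supremum and using $L^1$-convergence on the bounded enlarged interval yields $\|\theta_k-\theta\|_{L^\infty([0,t_f])}\to 0$.

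For the convergence of the cost, I would decompose
$$J_k - J = A\!\!\int_0^{t_f}\!\!(u_k^2 - u^2)\,ds + B\!\!\int_0^{t_f}\!\!(\theta_k^2 - \theta^2)\,ds + \!\!\int_{t_f}^{t_{f_k}}\!\!(A u_k^2 + B\theta_k^2)\,ds,$$
and handle each term separately. The boundary slice is controlled by $(A\varepsilon^2 + BM^2)|t_{f_k}-t_f|$, with $M$ any uniform $L^\infty$-bound on $\theta_k$ coming from $|\theta_k(t)|\le|\theta_0|+\varepsilon t_{f_k}$; it vanishes as $t_{f_k}\to t_f$. Using $u_k^2 - u^2 = (u_k-u)(u_k+u)$ with $|u_k+u|\le 2\varepsilon$, the first interior integral is bounded by $2A\varepsilon\|u_k-u\|_{L^1([0,t_f])}\to 0$. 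Using $\theta_k^2-\theta^2=(\theta_k-\theta)(\theta_k+\theta)$ with $\|\theta_k+\theta\|_\infty\le 2M$, the second interior integral is bounded by $2BM t_f\|\theta_k-\theta\|_{L^\infty([0,t_f])}\to 0$ by the first part of the statement.

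The only technical subtlety is the mismatch between the integration horizons $t_{f_k}$ and $t_f$, but thanks to the uniform $\varepsilon$-bound on the controls and the induced uniform bound on the trajectories the boundary slice is harmless in the limit. Note that the display "$\lim J_k = J$ in $L^\infty([0,t_f])$" in the statement should be read as plain convergence of the scalars $J_k, J\in\mathbb{R}$, since $J[u]$ is a number rather than a function on $[0,t_f]$.
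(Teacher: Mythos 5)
Your proof is correct and follows essentially the same route as the paper: both bound $|\theta_k-\theta|$ by the $L^1$ distance of the controls, and both estimate $J_k-J$ by factoring $u_k^2-u^2$ and $\theta_k^2-\theta^2$ and invoking the $L^1$ convergence of $u_k$ together with the uniform convergence of $\theta_k$. Your explicit treatment of the boundary slice $[t_f,t_{f_k}]$ and your remark that the ``$L^\infty$ convergence of $J_k$'' should be read as convergence of scalars are small but welcome clarifications of points the paper glosses over.
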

\begin{proof}
The proof of the uniform converge of $\theta_k$ to $\theta$ is analogous to one of Proposition \ref{convtraiett}. While for the convergence of $J_k$ 
\begin{equation*}
\begin{split}
\Vert J_k-J\Vert_\infty & \leq \int_0^{t_{f_k}}A\vert u_k^2(s)-u_(s)\vert \,ds\\
&+\int_0^{t_{f_k}} B\vert \theta_k^2(s)-\theta(s)^2\vert\,ds\\ & \leq A\Vert u_k+u\Vert_\infty \int_0^{t_{f_k}}\vert u_k(s)-u_(s)\vert \,ds \\ &+ B\Vert  \theta_k+\theta\Vert_\infty\int_0^{t_{f_k}}\vert \theta_k-\theta\vert \,ds.
\end{split}
\end{equation*}
Therefore we conclude for the $L^1$ convergence of $u_k$ to $u$ and the $L^\infty$ convergence of $\theta_k$ to $\theta$
\end{proof}
\subsection{Case $B=0$}
Let us now consider a particular case of the optimal control problem \eqref{quadratic} i.e we fix $B=0$. Also in this case we can split the problem into  two subproblems. Considering the first time interval the cost is
$$
J[u] =\int_0^{t_1} u(s)^2 \,ds.
$$
Notice that since $\dot\theta=u$ and using the Jensen inequality we have the following
\begin{equation}
J[u]=\int_0^{t_1} \dot\theta(s)^2\,ds\geq t_1\Bigl(\frac{1}{t_1}\int_0^{t_1} \dot\theta(s)\,ds\Bigr)^2=\frac{(\theta_1-\theta_0)^2}{t_1}.
\end{equation}
Moreover, since $u\in[-\varepsilon,\varepsilon]$, the equality holds only if the control $u$ is constant, either $-\varepsilon$ or $\varepsilon$. Therefore we conclude that in this case the problem is equivalent to the minimal time one with
\begin{equation}
u(t)=\begin{cases}
\varepsilon & \text{if} \quad \theta_1>\theta_0\\
-\varepsilon & \text{if}\quad \theta_0>\theta_1
\end{cases}
\end{equation}
and consequently $t_1=(\theta_1-\theta_0)/
\varepsilon.$
The same considerations are valid for the second time interval, therefore the entire problem is equivalent to the minimal time one treated in Section \ref{Sec:Minimum_time}. This means that if we constrain the control in the compact set $[-\varepsilon,\varepsilon]$ the optimal strategy for minimizing the energy is equivalent to the one which minimize the time to go from the initial to the final configuration.
\subsection{Case on $n$ switchings}
Recall that for the problem \eqref{Initial2}  and \eqref{quadraticfirst} we analyse the simple case with only a switching.
Let now suppose to perform $n$ switches to go from $x_0$ to $x_f$. Therefore we divide $\Delta x$ in $n$ sub-intervals  assuming that only one switching is required to move from the start point to the end point of each one. Thus $$J^n(u,t)=\sum_{i=1}^n\int_{0}^{t_i}\ell_0(\theta,u,s)\,ds + n $$ with $\ell_0(\theta,u,s)=1$ in the minimum time case and $\ell_0(\theta,u,s)=u(s)^2$ in the minimum energy one, is the total cost to minimize using $n$ switchings with $t_i$ the minimum time used to cross the subinterval $i$. Then the following hold
\begin{Proposition}
\label{n_min_time}
	\begin{equation*}
	\min_{n,u}J^n(u,t)\leq \min_u \left\{J^1(u,t)+1\right\},
	\end{equation*}
	where $J^1(u,t)$ is the cost to minimize using only one switching.
\end{Proposition}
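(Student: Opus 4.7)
The plan is to obtain the inequality essentially for free by restricting the left-hand infimum to the slice $n=1$ and then exploiting the extra slack of $1$ appearing on the right-hand side. In more detail, the first step is to note that, by definition of infimum over the pair of variables $(n,u)$, one has
\[
\min_{n,u} J^n(u,t) \;\leq\; \min_u J^1(u,t),
\]
since the right-hand side is the partial minimum obtained by freezing $n$ to $1$ before minimising over $u$.

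The second step is the observation that $J^1(u,t) \leq J^1(u,t) + 1$ pointwise in $u$, so passing to the infimum over $u$ yields
\[
\min_u J^1(u,t) \;\leq\; \min_u \bigl\{J^1(u,t) + 1\bigr\}.
\]
Chaining these two inequalities gives the proposition by transitivity.

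The only point that deserves verification is that the slice $n=1$ is non-empty, i.e.\ that there exists an admissible control steering the swimmer from $x_0$ to $x_f$ with exactly one switching; otherwise the partial infimum $\min_u J^1(u,t)$ would be vacuous. This, however, was already established in Section \ref{Sec:Minimum_time}: given $\Delta x = x_f - x_0$, the intermediate value argument on $F_2(\theta) - F_1(\theta)$ produces a switching angle $\theta_1$ realising the required displacement, and the explicit bang--bang control together with its piecewise-linear continuous approximation provides an admissible one-switching strategy.

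Since the argument is a bookkeeping exercise on the definition of $J^n$ and on the monotonicity of the infimum under restriction, no analytical obstacle arises; the whole content of the proof reduces to the two elementary comparisons above together with the recalled non-emptiness of the one-switching admissible set.
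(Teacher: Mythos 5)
Your argument is correct and is essentially the paper's own (one-line) proof: the minimum over all $(n,u)$ is bounded above by its restriction to the slice $n=1$, which is exactly the ``definition of minimum of a set'' the authors invoke. Your extra check that the one-switching admissible set is non-empty (via the intermediate-value argument for $\theta_1$) is a sensible detail the paper leaves implicit.
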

\begin{proof}
The proof follows by the definition of minimum of a set.
\end{proof}
In computing the explicit solution both of the minimum time problem and the minimum quadratic cost one we have supposed that to reach $x_f$ from $x_0$ only one switching in the dynamics is required. Now, we will show numerically that our assumption on the number of switching is also the optimal strategy for both the studied problems. We fix $\Delta x=x_f-x_0=10$ and use the following parameters
\begin{itemize}
	\item $a=10,\ b=0.1$, $\xi=1,\ \eta=2$;
	\item $m=1,\ m_{11}= a\pi, \ m_{22}=b\pi$;
	\item $\theta_0=\pi/6,\ 	A=1,\ B=0,\ n=30$.
\end{itemize}  
We divide $\Delta x$ into $n$ subintervals in each of them we compute the optimal time (and the control $L^2$ norm) and then we sum them up obtaining the total time (and the total quadratic cost) needed to cross the interval $[x_0,x_f]$. Finally we add to the total cost the number of switchings obtaining $J^n(u,t)$. Plotting each cost in function of the number $n$ of switchings that we perform, we get that the greater the number of switchings, the greater the time to reach $x_f$ is. Same conclusion also holds for the energy. See Fig. \ref{funzcrescente}. 
\begin{figure}[H]
	\centering%
		\subfigure[\label{fig1}]%
		{\includegraphics[scale=0.289]{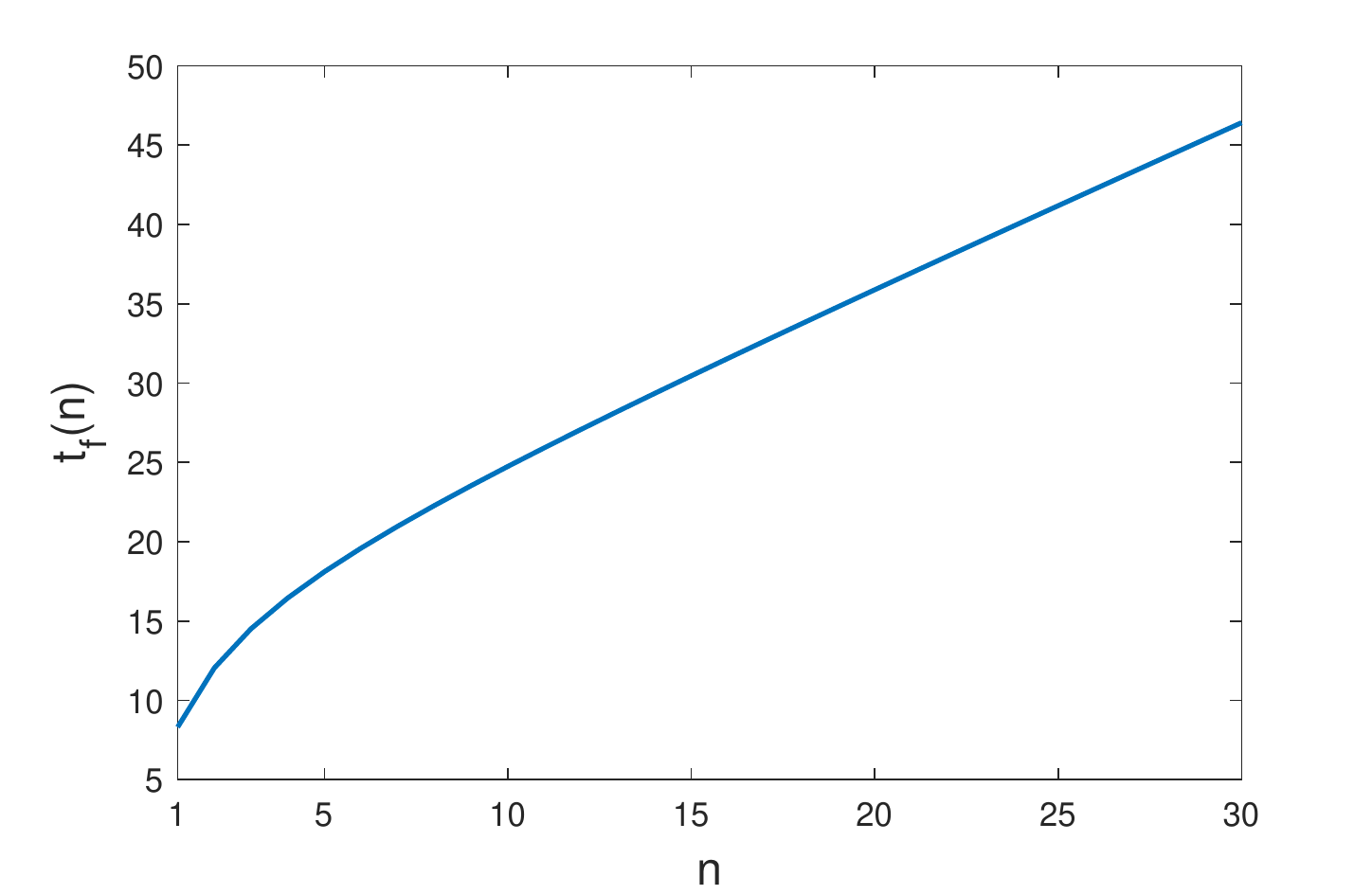}}
		\subfigure[\label{fig2}]%
		{\includegraphics[scale=0.289]{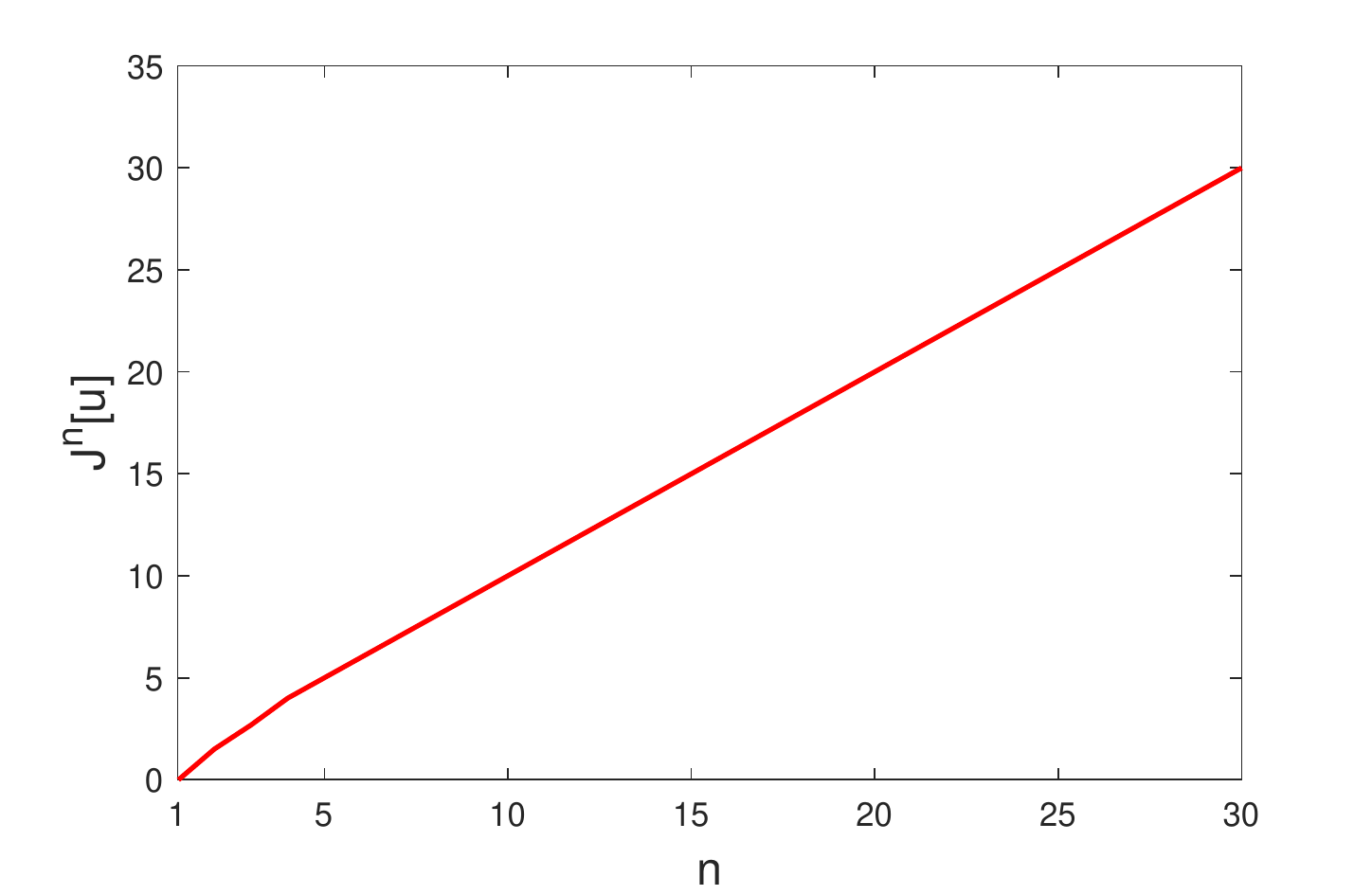}}
		\caption{\label{funzcrescente} Plot of the time (a) and of the cost (b) to realize the displacement $\Delta_x$ as function of the number $n$ of switchings.}
	\end{figure}

\section{CONCLUSIONS}
In this paper we explicitly derive the solution of two different optimal control problems (the minimum time one and the linear quadratic one) related to the motion of the scallop swimmer with one switching in the dynamics. We approximate the possible discontinuous solution with a suitable sequence of periodic continuous functions. Comparing the two problem we find that the optimal strategy for minimizing the $L^2$ norm of the control is equivalent to the one which minimize the time to reach the final configuration. The more general case with $n$ switching in the dynamics is then treated and numerical simulations showing the relations between $n$ and a generalized cost function $J^n$ are provided.
In future research, inspired by the numerical results, we plan to give an analytic proof about the above relations and study other optimal control problems.

%
\section*{ACKNOWLEDGMENT}
\small
The authors thank Fabio Bagagiolo and Antonio Marigonda for many enlighting and helpful discussions.

\end{document}